\theoremstyle{definition}
\newtheorem{definition}{Definition}
\theoremstyle{theorem}
\newtheorem{proposition}[definition]{Proposition}
\numberwithin{equation}{section}
\numberwithin{definition}{section}
\theoremstyle{remark}
\newtheorem{remark}[definition]{Remark}
\newtheorem{example}[definition]{Example}
\newtheorem{question}[definition]{Question}
\def\PP{\mathsf P}
\def\EE{\mathsf E}
\def\QQ{\mathsf Q}
\def\BB{\mathcal B}
\def\GG{\mathcal G}
\def\FF{\mathcal F}
\def\CC{\mathcal C}
\def\HH{\mathcal H}
\def\FFF{\mathfrak F}
\def\NN{\mathcal N}
\def\II{\mathcal I}
\def\LL{ \mathrm{L}}
\def\AA{\mathcal A}
\def\Ber{\mathrm{Ber}}
\newcommand\xqed[1]{%
  \leavevmode\unskip\penalty9999 \hbox{}\nobreak\hfill
  \quad\hbox{#1}}
\newcommand\finish{\xqed{$\diamond$}}
\begin{document}
\title{A couple of remarks on the convergence of $\sigma$-fields on probability spaces}

\author{Matija Vidmar}
\address{Department of Mathematics, University of Ljubljana and Institute of Mathematics, Physics and Mechanics, Slovenia}
\email{matija.vidmar@fmf.uni-lj.si}

\begin{abstract}
The following modes of convergence of sub-$\sigma$-fields on a given probability space have been studied in the literature: weak convergence, strong convergence, convergence with respect to the Hausdorff metric, almost-sure convergence, set-theoretic convergence, monotone convergence. It is noted that all preserve independence in the limit, and all are invariant under passage to an equivalent probability measure. Partial results for the case of operator-norm convergence obtain. 
\end{abstract}


\keywords{Convergence of $\sigma$-fields; negligible sets of probability measures; independence}

\subjclass[2010]{Primary: 60A05, 46B28; Secondary: 28A05, 28A20} 

\maketitle
\section{Introduction}

Fix a (not necessarily complete) probability space $(\Omega,\FF,\PP)$ and set $\NN:=\PP^{-1}(\{0\})=\{F\in \FF:\PP(F)=0\}$. For a sub-$\sigma$-field $\AA$ of $\FF$, $\overline{\AA}^\PP:=\AA\lor\sigma(\NN)=\sigma(\AA\cup\NN)$, the $\PP$-completion of $\AA$. A \emph{$\sigma$-subfield} means a $\PP$-complete sub-$\sigma$-field of $\FF$, i.e. a sub-$\sigma$-field of $\FF$ that is equal to its $\PP$-completion. The collection of all $\sigma$-subfields is denoted $\FFF$. 
For an $\FF/\mathcal{B}([-\infty,\infty])$-measurable map $f$ satisfying $\int  f^+d\PP\land \int f^-d\PP<\infty$, $\PP f:=\EE^\PP[f]$; if further $\AA\in \FFF$, then $\PP_\AA f:=\EE^\PP[f\vert \AA]$, the conditional expectation of $f$ w.r.t. $\AA$ under $\PP$.

Recall now that for a sequence  $(\BB_n)_{n\in \mathbb{N}_0}\subset \FFF$, classical martingale theory gives, for any $f\in \LL^1(\PP)$, the convergence $\PP_{\BB_n}f\to \PP_{\BB_0}f$ in $\LL^1(\PP)$ and $\PP$-a.s. as $n\to\infty$, provided one has
\begin{enumerate}[label=(MC),ref=(MC)]
\item \label{MC} \emph{Monotone convergence}. $\BB_n\subset \BB_{n+1}$ for all $n\in \mathbb{N}$ and $\BB_0=\lor_{n\in \mathbb{N}}\BB_n$, or $\BB_n\supset \BB_{n+1}$ for all $n\in \mathbb{N}$ and $\BB_0=\cap_{n\in \mathbb{N}}\BB_n$ \cite[Theorem~6.23]{kallenberg}.
\end{enumerate}
Generalizing/complementing this monotone convergence, the following ways of making precise the concept of convergence of a sequence of $\sigma$-subfields $(\BB_n)_{n\in \mathbb{N}}$ to a $\sigma$-subfield $\BB_0$ (under $\PP$), have been proposed and studied in the literature (among others; all the convergences are as $n\to\infty$):
\begin{enumerate}[label=(WC),ref=(WC)]
\item\label{WC} \emph{Weak convergence}. $\BB_n$ converges weakly to $\BB_0$  if $\PP_{\BB_n}\mathbbm{1}_A\to \PP_{\BB_0}\mathbbm{1}_A=\mathbbm{1}_A$ in $\PP$-probability for every $A\in \BB_0$ \cite{coquet,alonso-brambila}.
\end{enumerate}
\begin{enumerate}[label=(SC),ref=(SC)]
\item\label{SC} \emph{Strong convergence}. $\BB_n$ converges strongly to $\BB_0$  if $\PP_{\BB_n}\mathbbm{1}_A\to  \PP_{\BB_0}\mathbbm{1}_A$ in $\PP$-probability for every $A\in \FF$ \cite{kudo,piccinini,artstein,alonso-brambila} \cite[Problem~IV.3.2]{neveu-book} \cite[Section~2]{tsirelson} \cite[Section VIII.2]{hu}.
\end{enumerate}
\begin{enumerate}[label=(HC),ref=(HC)]
\item\label{HC} \emph{Hausdorff convergence}. $\BB_n$ converges to $\BB_0$  w.r.t. the Hausdorff metric if $D(\BB_n,\BB_0)\to 0$, where for $\AA\in \FFF$ and $\BB\in \FFF$, $D(\AA,\BB):=\rho(\AA,\BB)+\rho(\BB,\AA)$ with $\rho(\AA,\BB):=\sup_{A\in \AA}\inf_{B\in \BB}\PP(A\triangle B)$ \cite{boylan,rogge,mukerjee,neveu,zandt,landers} \cite[Section VIII.2]{hu}.
\end{enumerate}
\begin{enumerate}[label=(STC),ref=(STC)]
\item\label{STC} \emph{Set-theoretic convergence}. $\BB_n$ converges to $\BB_0$  in the set-theoretic sense if $\liminf_{n\to\infty} \BB_n:=\lor_{n\geq 1}\cap_{k\geq n}\BB_k=\BB_0=\cap_{n\geq 1}\lor_{k\geq n}\BB_k=:\limsup_{n\to\infty}\BB_n$  \cite{fetter,alonso} \cite[Problem~II.6]{rao}.
\end{enumerate}
\begin{enumerate}[label=(ASC),ref=(ASC)]
\item\label{ASC} \emph{Almost-sure convergence}. $\BB_n$ converges to $\BB_0$  in the almost-sure sense if $\PP$-a.s. $\PP_{\BB_n}f\to \PP_{\BB_0}f$ for any $f\in \LL^1(\PP)$  \cite{alonso}.
\end{enumerate}
And, for $\{p,q\}\subset [1,\infty]$, $q\leq p$:
\begin{enumerate}[label=(ONC$_{p}^q$),ref=(ONC$_{p}^q$),itemindent=0.25cm]
\item\label{ONC} \emph{Operator-norm convergence}. $\BB_n$ converges to $\BB_0$  in the operator-norm sense if $\PP_{\BB_n}\to \PP_{\BB_0}$ in the operator norm $\Vert\cdot\Vert_{\LL^p\to\LL^q}$ when viewed as  mappings between the (real) normed spaces $(\LL^p(\PP),\Vert\cdot\Vert_{\LL^p(\PP)})$ and $(\LL^q(\PP),\Vert\cdot\Vert_{\LL^q(\PP)})$  \cite{rogge}. 
\end{enumerate}
Beyond the obvious relevance of these convergence types to the issue of continuity of conditional expectations w.r.t. the conditioning $\sigma$-field, we note applications in statistics \cite{kudo-app,maxwell}, studying closeness and convergence of information \cite{zandt} \cite[Section VIII.2]{hu}, to the theory of noises \cite{tsirelson} (see also the references therein).

As for our contribution, we shall demonstrate two desirable properties shared by (essentially) all these modes of convergence. First, they all preserve independence in the limit --- and this claim generalizes to conditional independence, save for \ref{WC} --- see Section~\ref{section:independence}  (in particular Remark~\ref{remark:preservation} for the precise meaning of preservation of independence in the limit). Second, 
excepting (perhaps) only \ref{ONC} when $p=q$, all are invariant under passage to an equivalent probability measure (the latter is trivial for \ref{MC} and \ref{STC}, but not obvious for the others) --- see Section~\ref{section:invariance}. Given that $\sigma$-subfields are often interpreted as bodies of information, and hence convergence of these as a convergence of information, it is certainly note-worthy that all these types of convergence do in fact depend on the underlying probability measure $\PP$ only via $\NN=\PP^{-1}(\{0\})$ (which, short of dispensing with the probability measure altogether, is surely the best we can hope for). Likewise, independence is a fundamental probabilistic property -- its preservation in the limit of $\sigma$-fields deserves to be made explicit. We will indeed see in relation to this, that the simultaneous consideration of the various convergence types enunciated above allows for a great economy of argument. 

We will also show \emph{en passant} that \ref{ONC} with $q<p$ is equivalent to \ref{HC}, while the case $p=q=1$ or $p=q=\infty$ is vacuous (apart from the trivial case of $(\BB_n)_{n\in\mathbb{N}}$ ultimately constant), but the case $p=q\in (1,\infty)$ is not (Section~\ref{sect:preliminaries}).

Finally, in terms of what has been noted in the literature in connexion to this thus far:
\begin{itemize}[leftmargin=0cm,itemindent=.5cm]
\item \cite[Corollary~3.6]{tsirelson} gives, assuming $\LL^2(\PP)$ is separable, preservation of \emph{pairwise} independence in the strong limit. (Were the join operation $\lor$ (sequentially) continuous under \ref{SC}, then preservation of independence would be an essentially immediate corollary. But it is not, see e.g. the example of \cite[Section~1.2]{tsirelson}.) 
\item \cite[Theorem~VIII.2.23]{hu} gives invariance of \ref{HC} under passage to a ``uniformly absolutely continuous'' (see  \cite[Definition~VIII.2.22]{hu}) probability measure. 
\item \cite[Theorem~VIII.2.40]{hu} gives invariance of \ref{SC} under passage to an equivalent probability measure, assuming $\LL^1(\PP)$ is separable. 
\end{itemize}
(Note that for $r\in [1,\infty)$, the separability of $\LL^r(\PP)$ is equivalent to $\FF=\overline{\sigma(\AA)}^\PP=\sigma(\AA\lor\NN)$ for some denumerable $\AA\subset \FF$.) 

\section{Preliminaries}\label{sect:preliminaries}
We gather some relevant results scattered in the literature and make some observations. Let $(\BB_n)_{n\in \mathbb{N}_0}$ be a sequence of $\sigma$-subfields. 

\begin{enumerate}[leftmargin=0cm,itemindent=.5cm,label=\textbf{(\arabic*)},ref=\arabic*]
\item\label{preliminaries:Hausdorff} \textbf{Hausdorff metric}. Thanks to the insistence on the $\PP$-completeness of the $\sigma$-subfields, $D$ is a metric on $\FFF$ \cite[Theorem~1 \& Corollary~1]{boylan}. By taking $\lor$ instead of $+$ in its definition (see \ref{HC}), one obtains an equivalent metric $\delta\leq 1\land D$, which is the restriction to $\FFF\times \FFF$ of the usual Hausdorff distance on closed subsets of $\FF$, associated to the pseudometric $\FF\times \FF\ni (A,B)\mapsto \PP(A\triangle B)\in [0,1]$. 
\item \label{preliminaries:order} \textbf{Implications and non-implications between the various convergence types}. 

As already observed in the Introduction, \ref{MC} $\Rightarrow$ \ref{ASC} (and, of course, \ref{MC} $\Rightarrow$  \ref{STC}). 

\ref{SC} $\Rightarrow$ \ref{WC} trivially, but not the other way around  --- not even when  \ref{WC} is to the largest $\sigma$-subfield $\BB_0$ for which it holds, cf. \eqref{preliminaries:weak}, second bullet point, below ---  \cite[Example~3.1]{kudo}. 

\ref{HC} $\Rightarrow$ \ref{SC} \cite[Theorem~4]{boylan} and \ref{STC} $\Rightarrow$ \ref{SC} \cite{fetter}, though neither conversely \cite[Example~4.4]{alonso-brambila}; \ref{ASC} $\Rightarrow$ \ref{SC} trivially, again the converse fails \cite[Example~3.5]{piccinini}. 

Of \ref{STC}, \ref{HC}, \ref{ASC} no one implies another: \ref{STC} $\nRightarrow$ \ref{ASC} \cite{alonso};  \ref{ASC} $\nRightarrow$ \ref{STC} \cite[Example 4.1]{alonso-brambila};  \ref{STC} $\nRightarrow$ \ref{HC} \cite[Example 4.2]{alonso-brambila};  \ref{HC} $\nRightarrow$ \ref{STC} \cite[Example 4.3]{alonso-brambila};  \ref{HC} $\nRightarrow$ \ref{ASC} \cite[penultimate paragraph]{boylan}; \ref{MC} (hence \ref{ASC}, \ref{STC}) $\nRightarrow$ \ref{HC} \cite[Example 4.2]{alonso-brambila}. 

\ref{ONC} $\Rightarrow$  \ref{HC}.
\begin{proof}
For $\sigma$-subfields $\AA$ and $\BB$, and for $A\in \AA$, $\inf_{B\in \BB}\PP(A\triangle B)=\inf_{B\in \BB}\PP \vert \mathbbm{1}_A-\mathbbm{1}_B\vert\leq \PP \vert \mathbbm{1}_A-\mathbbm{1}_{\{\PP_\BB\mathbbm{1}_A>1/2\}}\vert$. Now if  $q<\infty$, we obtain $\inf_{B\in \BB}\PP(A\triangle B)\leq 2^q\PP\vert \mathbbm{1}_A-\PP_\BB\mathbbm{1}_A\vert^q=2^q\PP\vert \PP_\AA\mathbbm{1}_A-\PP_\BB\mathbbm{1}_A\vert^q\leq 2^q\Vert \PP_\AA-\PP_\BB\Vert_{\LL^p\to\LL^q}^q$, where we have used $\vert \mathbbm{1}_A-\mathbbm{1}_{\{\PP_\BB\mathbbm{1}_A>1/2\}}\vert\leq 2^q\vert \mathbbm{1}_A-\PP_\BB\mathbbm{1}_A\vert^q$. When $q=\infty$, we have simply $ \inf_{B\in \BB}\PP(A\triangle B)\leq 2\PP\vert \PP_\AA\mathbbm{1}_A-\PP_\BB\mathbbm{1}_A\vert\leq 2\Vert \PP_\AA-\PP_\BB\Vert_{\LL^\infty\to\LL^\infty}$, since $\vert \PP_\AA\mathbbm{1}_A-\PP_\BB\mathbbm{1}_A\vert\leq \Vert \PP_\AA-\PP_\BB\Vert_{\LL^\infty\to\LL^\infty}$.
\end{proof}

Finally, \ref{HC} $\Rightarrow$  \ref{ONC}, assuming that $p>q$.
\begin{proof}
Recall the metric $\delta$ from \eqref{preliminaries:Hausdorff} and $\rho$ from \ref{HC}. We quote the following two results from the literature: 
\begin{enumerate}[(a)]
\item\label{help:a} Let $a\in (0,\infty)$, $r\in [1,\infty)$, $H\subset \LL^r(\PP)$ and define $\delta_{H,r}(a):=\sup\{\Vert f\mathbbm{1}_{\{\vert f\vert>a\}}\Vert_{\LL^r(\PP)}:f\in  H\}$. Then, for $\sigma$-subfields $\AA$ and $\BB$  satisfying $\AA\subset \BB$, one has the inequality $$\sup\{\Vert \PP_{\BB}f-\PP_{\AA}f\Vert_{\LL^r(\PP)}:f\in H\}\leq C_ra[\delta(\AA,\BB)(1-\delta(\AA,\BB))]^{1/r}+2\delta_{H,r}(a),$$ where $C_r=2\cdot 2^{1/r}$ if $r<2$ and $C_r=2$ if $r\geq 2$. \cite[Theorem~4, items (i) \& (ii)]{rogge}
\item\label{help:b} $\rho(\AA\lor\BB,\BB)\leq 4\rho(\AA,\BB)$ for $\sigma$-subfields $\AA$ and $\BB$.  \cite[Corollary~4]{landers}
\end{enumerate}
Set $H:=\{f\in \LL^p(\PP):\Vert f\Vert_{\LL^p(\PP)}\leq 1\}$. For any $f\in H$, by the triangle inequality, $$\Vert \PP_{\BB_n}f-\PP_{\BB_0}f\Vert_{\LL^q(\PP)}\leq \Vert \PP_{\BB_n}f-\PP_{\BB_0\lor\BB_n}f\Vert_{\LL^q(\PP)}+\Vert \PP_{\BB_n\lor \BB_0}f-\PP_{\BB_0}f\Vert_{\LL^q(\PP)}.$$ Then, by \ref{help:a}, for any $a\in (0,\infty)$, $$\sup\{\Vert \PP_{\BB_n}f-\PP_{\BB_0}f\Vert_{\LL^q(\PP)}:f\in H\}$$ 
$$\leq 2\cdot 2^{1/q}a[\delta(\BB_n,\BB_0\lor \BB_n)(1-\delta(\BB_n,\BB_0\lor\BB_n))]^{1/q}+2\delta_{H,q}(a)+$$
$$ 2\cdot 2^{1/q}a[\delta(\BB_n\lor\BB_0,\BB_0)(1-\delta(\BB_n\lor \BB_0,\BB_0))]^{1/q}+2\delta_{H,q}(a)$$ (where the notation $\delta_{H,q}(a)$ is that of \ref{help:a} above), which by \ref{help:b} is (note that for $\sigma$-subfields $\AA\supset\BB$, $\delta(\AA,\BB)=\delta(\BB,\AA)=\rho(\AA,\BB)$) 
$$\leq 
4\cdot 8^{1/q}a\delta(\BB_n,\BB_0)^{1/q}+4\delta_{H,q}(a).
$$ Since $\delta_{H,q}(a)\leq a^{-(\frac{p}{q}-1)}$ when $p<\infty$ and $\delta_{H,q}(a)\leq \mathbbm{1}_{[0,1)}(a)$ when $p=\infty$, it follows that $\limsup_{n\to\infty}\sup\{\Vert \PP_{\BB_n}f-\PP_{\BB_0}f\Vert_{\LL^q(\PP)}:f\in H\}\leq 4\delta_{H,q}(a)\to 0$ as $a\to\infty$. Hence $\lim_{n\to\infty}\sup\{\Vert \PP_{\BB_n}f-\PP_{\BB_0}f\Vert_{\LL^q(\PP)}:f\in H\}=0$, which is the desired operator norm convergence. 
\end{proof}
(The last two implications, in the case $q=1$, $p=\infty$ can be found e.g. in \cite[Theorem VIII.2.21]{hu}.)

Up to trivial corollaries, this exhausts the mutual implications and non-implications of the convergence types  \ref{ONC} for $p>q$, \ref{MC}, \ref{HC}, \ref{STC}, \ref{ASC}, \ref{SC}, \ref{WC} (Figure~\ref{figure:impli}).
\begin{figure}
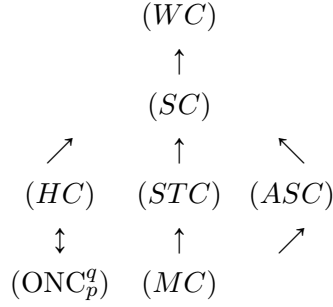

$$
\begin{array}{ccc}
&\ref{WC}&\\
&\uparrow&\\
&\ref{SC}&\\
\nearrow &\uparrow& \nwarrow\\
\ref{HC}&\ref{STC}&\ref{ASC}\\
\updownarrow &\uparrow& \nearrow\\
$\ref{ONC}$&\ref{MC}&
\end{array}
$$
\caption{Implications between the various types of convergence (with $q<p$ for the case of \ref{ONC}); absence of an (by transitivity implied) arrow means the implication fails in general. 
}\label{figure:impli}
\end{figure}

\item \textbf{Uniqueness of limits}. Excepting \ref{WC}, the limits are unique. Indeed, the results of e.g. \cite{kudo} imply uniqueness of the limit in the case of \ref{SC}.

\item \label{preliminaries:weak}\textbf{Weak covergence}. 

$\bullet$ For any $p\in [1,\infty)$, $\BB_n\to \BB_0$ weakly iff $\PP_{\BB_n}f\to f$ in $\LL^p(\PP)$ for every $f\in \LL^p(\PP\vert_{\BB_0})$ (i.e. for every $f\in \LL^p(\PP)$ that is $\BB_0$ measurable). 
\begin{proof}
By linearity, for sure $\PP_{\BB_n}f\to  \PP_{\BB_0}f$ in $\PP$-probability for any bounded simple $\BB_0$-measurable $f$. Now let $f\in \LL^p(\PP\vert_{\BB_0})$; $\delta>0$. The simple functions being dense in $\LL^p(\PP)$, there exists a simple $\BB_0$-measurable $f^\delta$ for which $\PP\vert f-f^\delta\vert^p<\delta$. Then it follows from the decomposition $$\PP_{\BB_n}f-\PP_{\BB_0}f=\PP_{\BB_n}(f-f^\delta)+\PP_{\BB_n}f^\delta-\PP_{\BB_0}f^\delta+\PP_{\BB_0}(f^\delta-f),$$ from the elementary estimate $\vert x+y\vert^p\leq 2^{p-1}(\vert x\vert^p+\vert y\vert^p)$ for $\{x,y\}\subset \mathbb{R}$, from conditional Jensen's inequality, finally from the fact that boundedness implies uniform integrability (hence coupled with convergence in $\PP$-probability, $\LL^1(\PP)$ convergence), that $\limsup_{n\to\infty}\PP\vert \PP_{\BB_n}f-\PP_{\BB_0}f\vert^p\leq C\delta$, for some constant $C\in (0,\infty)$ depending only on $p$. Let $\delta\downarrow 0$.
\end{proof}
$\bullet$ Then, according to \cite[Lemmas~1.1 and~1.3]{alonso-brambila}, $\BB_n\to \BB_0$ weakly as $n\to \infty$ iff $\BB_0\subset \BB_\PP,$ where 
\begin{equation*}
\BB_\PP:=\{A\in \FF:\lim_{n\to\infty}\inf_{B\in \BB_n}\PP(A\triangle B)=0\}.
\end{equation*}
The $\sigma$-subfield $\BB_\PP$ coincides with the $\PP\text{-}\liminf_{n\to\infty}\BB_n$ of \cite{kudo}, see \cite[Theorem~3.2]{kudo}.

$\bullet$ The join (sup) operation $\lor$ is sequentially continuous under weak convergence \cite[Proposition~2.3]{coquet} (but the meet (inf) $\land$ is not \cite[Proposition~2.1]{coquet}). It means that for sequences $(\AA_n)_{n\in \mathbb{N}}$ and $(\BB_n)_{n\in \mathbb{N}}$ in $\FFF$, if $\AA_n\to\AA_0$ and $\BB_n\to\BB_0$ weakly, then also $\AA_n\lor\BB_n\to\AA_0\lor \BB_0$ weakly (but in general this fails if $\cap$ replaces $\lor$). 
\item\label{preliminaries:strong} \textbf{Strong convergence}. With an analogous justification to the one in \eqref{preliminaries:weak}, for any $p\in [1,\infty)$, $\BB_n\to \BB_0$ strongly iff $\PP_{\BB_n}f\to \PP_{\BB_0}f$ in $\LL^p(\PP)$ for every $f\in \LL^p(\PP)$. Thus strong convergence is nothing but the strong operator convergence of the conditional expectation operators in the spaces $\LL^p(\PP)$, $p\in [1,\infty)$ (but not in $\LL^\infty(\PP)$; the latter fails even for monotone increasing sequences, see \cite[final paragraph]{fetter}). 

\item\label{operator} \textbf{Operator convergence $\LL^p\to\LL^p$}. Convergence in the operator norm $\Vert\cdot\Vert_{\LL^p\to\LL^p}$, $p\in [1,\infty]$, appears elusive.
\begin{itemize}[leftmargin=0cm,itemindent=.5cm]
\item For one, \ref{ONC} of not-ultimately-constant sequences of $\sigma$-subfields fails always when $p=q=1$ or when $p=q=\infty$. Indeed, if $\AA$ and $\BB$ are two distinct $\sigma$-subfields, then $\Vert\PP_\AA-\PP_\BB\Vert_{\LL^1\to\LL^1}\geq 1$ and $\Vert \PP_\AA-\PP_\BB\Vert_{\LL^\infty\to\LL^\infty}\geq 1/2$. To see this, let $B\in \BB\backslash \AA$. Set first $f=\mathbbm{1}_B-\PP_\AA\mathbbm{1}_B$, so that $0\ne f\in \LL^1(\PP)$. Then $\PP$-a.s., $\PP_\AA f=0$, while $f$ (hence $\PP_\BB f$) is $\geq 0$ on $B$ and $\leq 0$ on $\Omega\backslash B$. It follows that $\PP\vert \PP_\BB f-\PP_\AA f \vert=\PP\vert \PP_\BB f\vert=\PP[(\PP_\BB f)\mathbbm{1}_B-(\PP_\BB f)\mathbbm{1}_{\Omega\backslash B}]=\PP f\mathbbm{1}_B-\PP f\mathbbm{1}_{\Omega \backslash B}=\PP \vert f\vert$, viz. $\Vert\PP_\AA-\PP_\BB\Vert_{\LL^1\to\LL^1}\geq 1$. Set now $f=\mathbbm{1}_B$, noting that $\Vert f\Vert_{\LL^\infty(\PP)}=1$. Then $\PP(B\triangle \{\PP_\AA\mathbbm{1}_B>1/2\})>0$ so that either $\PP(B\backslash \{\PP_\AA\mathbbm{1}_B>1/2\})>0$ or $\PP(\{\PP_\AA\mathbbm{1}_B>1/2\}\backslash B)>0$, which coupled with the $\PP$-a.s. equality $\PP_\BB f=\mathbbm{1}_B$, yields $\Vert \PP_\AA-\PP_\BB\Vert_{\LL^\infty\to\LL^\infty}\geq 1/2$. 
%

\item If for infinitely many $n\in \mathbb{N}$, $\BB_n\subsetneq\BB_0$ or $\BB_0\subsetneq \BB_n$ --- or if $\BB_n\subsetneq \BB_m$ for arbitrarily large $n\in \mathbb{N}$ and $m\in \mathbb{N}$ --- then again $\PP_{\BB_n}$ does not converge to $\PP_{\BB_0}$ in the $\Vert\cdot\Vert_{\LL^p\to\LL^p}$ norm. For if $\AA$ is a $\sigma$-subfield that is strictly contained in the $\sigma$-subfield $\BB$, one can take $B\in \BB\backslash \AA$, set $f=\mathbbm{1}_B-\PP_\AA\mathbbm{1}_B$, which is then not $\PP$-a.s. equal to $0$, and finds that $\PP$-a.s. $\PP_{\BB}f-\PP_\AA f= f$.
In particular, one sees that 
\ref{ONC} simply precludes \ref{MC} of not-ultimately-constant sequences of $\sigma$-subfields altogether. 

\item If, for infinitely many $n\in \mathbb{N}$, $\BB_n$ contains a non-$\PP$-trivial event independent of $\BB_0$ or vice versa --- or if this obtains with $\BB_m$ in place of $\BB_0$ for arbitrarily large $m$ and $n$, --- then $\PP_{\BB_n}$ does not converge to $\PP_{\BB_0}$ in the $\Vert\cdot\Vert_{\LL^p\to\LL^p}$ norm. 
For if $A$ belongs to a $\sigma$-subfield $\AA$, is independent of a $\sigma$-subfield $\BB$, and has $\PP(A)\in (0,1)$, then taking $f=\PP(A)^{-1}\mathbbm{1}_A-(1-\PP(A))^{-1}\mathbbm{1}_{\Omega\backslash A}$, one has $\PP$-a.s. $ \PP_{\AA}f-\PP_\BB f= f-\PP f=f$, the latter not being $\PP$-a.s. equal to $0$.

\item Still, for $p\in (1,\infty)$, this convergence type is not vacuous with respect to not-ultimately-constant sequences of $\sigma$-subfields: 

\begin{example}\footnote{Due to J. Warren, private communication.}
Fix $p\in (1,\infty)$. Let for $r\in [0,1]$, $\Ber(r)$ denote the Bernoulli law (on the space $\{0,1\}$) with success parameter $r$: $\Ber(r)(\{0\})=1-r$ and $\Ber(r)(\{1\})=r$. Let  $\Omega=\{0,1\}^{\mathbb{N}_0}$; let $X_i$, $i\in \mathbb{N}_0$, be the canonical projections; $\FF$ the $\sigma$-field generated by them; $\PP=\Ber(1/2)\times \bigtimes_{n\in \mathbb{N}}\Ber(1/2^n)$. For $n\in \mathbb{N}_0$, set $Y_n:=X_0\lor X_n$ and then let $\BB_n=\overline{\sigma(Y_n)}^{\PP}$, the $\PP$-complete $\sigma$-subfield generated by $Y_n$. Note that by the tower property of conditional expectations and the independence of the $(X_m)_{m\in \mathbb{N}_0}$, for $n\in \mathbb{N}$, $\sup\{\PP \vert \PP_{\BB_n}f-\PP_{\BB_0}f\vert^p:f\in \LL^p(\PP), \Vert f\Vert_{\LL^p(\PP)}\leq 1\}=\sup\{\QQ^{1/2^n} \vert \QQ^{1/2^n}_{\AA_2}f-\QQ^{1/2^n}_{\AA_1}f\vert^p:f\in \LL^p(\QQ^{1/2^n}),\Vert f\Vert_{\LL^p(\QQ^{1/2^n})}\leq 1\}$, where $\QQ^\epsilon:=\Ber(1/2)\times\Ber(\epsilon)$ for $\epsilon\in [0,1]$, and where, with $Z_1:\{0,1\}\times\{0,1\}\to \{0,1\}$ and $Z_2:\{0,1\}\times\{0,1\}\to \{0,1\}$ being the projections onto the first and second coordinate respectively, $\AA_1:=\sigma(Z_1)$ and $\AA_2:=\sigma(Z_1\lor Z_2)$. 
Now fix $\epsilon\in [0,1]$. For $f\in \LL^1(\QQ^\epsilon)$, we have $$\QQ^\epsilon_{\AA_2}f=\mathbbm{1}_{\{(0,0)\}} f(0,0)+\mathbbm{1}_{\{(0,1),(1,0),(1,1)\}} \frac{f(0,1)\epsilon+ f(1,0)(1-\epsilon)+ f(1,1)\epsilon}{(1+\epsilon)},$$  whilst
 $$\QQ^\epsilon_{\AA_1}f=\mathbbm{1}_{\{(0,0),(0,1)\}}[f(0,0)(1-\epsilon)+f(0,1)\epsilon]+\mathbbm{1}_{\{(1,0),(1,1)\}}[f(1,0)(1-\epsilon)+f(1,1)\epsilon].$$ 

In view of the equality $\QQ^\epsilon \vert \QQ^\epsilon_{\AA_2}f-\QQ^\epsilon_{\AA_1}f\vert^p=\sum_{\omega\in \{0,1\}\times \{0,1\}} \QQ^\epsilon (\vert \QQ^\epsilon_{\AA_2}f-\QQ^\epsilon_{\AA_1}f\vert^p\mathbbm{1}_{\{\omega\}})$, and using the elementary estimate $(x+y)^r\leq 2^{r-1}(x^r+y^r)$ for $\{x,y\}\subset [0, \infty)$ and $r\in [1,\infty)$, it is now straightforward to see that $\QQ^\epsilon[\vert \QQ^\epsilon_{\AA_2}f-\QQ^\epsilon_{\AA_1}f\vert^p]\leq C_p\epsilon^{(p-1)\land 1}\QQ^\epsilon[ \vert f\vert^p]$ for some $C_p\in (0,\infty)$ depending only on $p$. It follows that $\PP_{\BB_n}\to \PP_{\BB_0}$ in the $\Vert\cdot\Vert_{\LL^p\to\LL^p}$ operator norm. \finish
\end{example}
\end{itemize}

\end{enumerate}

%
%

\section{Preservation of independence in the limit}\label{section:independence}

\begin{proposition}\label{proposition:preservation:strong}
Let $(\CC_n)_{n\in \mathbb{N}_0}$ be a sequence in $\FFF$ with $\CC_n\to \CC_0$ strongly as $n\to\infty$; $\II$ an arbitrary index set; finally $(\BB_n^i)_{(n,i)\in \mathbb{N}_0\times \II}$ a collection of $\sigma$-subfields of $\FF$ with 

$\BB_n^i\to \BB_0^i$ weakly as $n\to \infty$ for each $i\in \II$ 

\noindent and with

the family $(\BB_n^i)_{i\in \II}$ conditionally independent given $\CC_n$ for each $n\in \mathbb{N}$.

\noindent Then the family $(\BB_0^i)_{i\in \II}$ is conditionally independent given $\CC_0$. 
\end{proposition}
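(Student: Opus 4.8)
The plan is to verify head-on the defining property of conditional independence of $(\BB_0^i)_{i\in\II}$ given $\CC_0$. Fix a finite $J\subset\II$ and, for each $i\in J$, an event $A_i\in\BB_0^i$. Since $\mathbbm{1}_{\bigcap_{i\in J}A_i}=\prod_{i\in J}\mathbbm{1}_{A_i}$, conditional independence of the family $(\BB_0^i)_{i\in\II}$ given $\CC_0$ means precisely that $\PP_{\CC_0}[\prod_{i\in J}\mathbbm{1}_{A_i}]=\prod_{i\in J}\PP_{\CC_0}\mathbbm{1}_{A_i}$ $\PP$-a.s. for every such $J$ and every such choice of the $A_i$; so it suffices to establish this last identity. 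The idea is to replace each $\mathbbm{1}_{A_i}$ by a $\BB_n^i$-measurable approximant, use the conditional independence given $\CC_n$, and then let $n\to\infty$.

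For $i\in J$ and $n\in\mathbb{N}$ put $f_i:=\mathbbm{1}_{A_i}$ and $f_i^{(n)}:=\PP_{\BB_n^i}f_i$. Each $f_i^{(n)}$ is $\BB_n^i$-measurable with $0\leq f_i^{(n)}\leq 1$, and since $A_i\in\BB_0^i$ and $\BB_n^i\to\BB_0^i$ weakly, the first bullet point of \eqref{preliminaries:weak} (with $p=1$) gives $f_i^{(n)}\to f_i$ in $\LL^1(\PP)$ as $n\to\infty$. For each $n\in\mathbb{N}$, conditional independence of $(\BB_n^i)_{i\in\II}$ given $\CC_n$ --- which, by the functional monotone class theorem, entails $\PP_{\CC_n}[\prod_{i\in J}g_i]=\prod_{i\in J}\PP_{\CC_n}g_i$ $\PP$-a.s. for all bounded $\BB_n^i$-measurable $g_i$, $i\in J$ --- applied to $g_i=f_i^{(n)}$ yields
$$\PP_{\CC_n}\Big[\prod_{i\in J}f_i^{(n)}\Big]=\prod_{i\in J}\PP_{\CC_n}f_i^{(n)}\qquad\PP\text{-a.s.}$$
It remains to pass to the limit $n\to\infty$ on both sides in $\LL^1(\PP)$.

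Two elementary observations suffice. First: if for each $i\in J$ the sequence $(h_i^{(n)})_{n\in\mathbb{N}}\subset\LL^\infty(\PP)$ is uniformly bounded by $1$ and converges in $\LL^1(\PP)$ to some $h_i$ (then also bounded by $1$), then $\prod_{i\in J}h_i^{(n)}\to\prod_{i\in J}h_i$ in $\LL^1(\PP)$ --- telescope the difference and apply Hölder's inequality, each partial product having $\LL^\infty$-norm at most $1$. Second: since $\PP_{\CC_n}$ is an $\LL^1(\PP)$-contraction and $\CC_n\to\CC_0$ strongly, whenever $g^{(n)}\to g$ in $\LL^1(\PP)$ one has $\PP_{\CC_n}g^{(n)}\to\PP_{\CC_0}g$ in $\LL^1(\PP)$ --- indeed $\Vert\PP_{\CC_n}g^{(n)}-\PP_{\CC_0}g\Vert_{\LL^1(\PP)}\leq\Vert g^{(n)}-g\Vert_{\LL^1(\PP)}+\Vert\PP_{\CC_n}g-\PP_{\CC_0}g\Vert_{\LL^1(\PP)}$, the last term vanishing in the limit by the characterization of strong convergence in \eqref{preliminaries:strong} (with $p=1$). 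Now apply the first observation with $h_i^{(n)}=f_i^{(n)}$ to get $\prod_{i\in J}f_i^{(n)}\to\prod_{i\in J}f_i$ in $\LL^1(\PP)$, then the second observation to conclude that the left-hand side of the displayed identity converges to $\PP_{\CC_0}[\prod_{i\in J}f_i]$ in $\LL^1(\PP)$; on the right-hand side, the second observation applied factor by factor gives $\PP_{\CC_n}f_i^{(n)}\to\PP_{\CC_0}f_i$ in $\LL^1(\PP)$ with all these functions bounded by $1$, so the first observation gives $\prod_{i\in J}\PP_{\CC_n}f_i^{(n)}\to\prod_{i\in J}\PP_{\CC_0}f_i$ in $\LL^1(\PP)$. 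Equating the two limits yields $\PP_{\CC_0}[\prod_{i\in J}\mathbbm{1}_{A_i}]=\prod_{i\in J}\PP_{\CC_0}\mathbbm{1}_{A_i}$ $\PP$-a.s., as required.

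There should be no genuine obstacle here --- the argument is soft --- but the one thing to get right is the order of the two limiting steps on each side. Mere weak convergence of $\CC_n$ would be useless for moving $\PP_{\CC_n}$ past a limit, so one must first use the weak convergence $\BB_n^i\to\BB_0^i$ to trade the moving integrands $f_i^{(n)}$ for the fixed functions $f_i$ (an $\LL^1$-error controlled uniformly, thanks to the uniform bound by $1$), and only then bring in the strong convergence of $\CC_n$ on these now-fixed functions. This is precisely where the hypothesis that $\CC_n\to\CC_0$ \emph{strongly} --- and not merely weakly --- is used, which also explains why the conditional version has no counterpart for \ref{WC}. (Taking every $\CC_n$ to be the $\PP$-trivial $\sigma$-subfield $\sigma(\NN)$, which converges strongly trivially, recovers the plain assertion that independence is preserved under weak --- hence, by Figure~\ref{figure:impli}, under strong, Hausdorff, set-theoretic, almost-sure and monotone --- convergence.)
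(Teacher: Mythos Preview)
Your argument is correct, and it follows a genuinely different route from the paper's. The paper first reduces to a two-element index set by appealing to the sequential continuity of the join $\lor$ under weak convergence (preliminary \eqref{preliminaries:weak}, last bullet point) together with an inductive characterisation of conditional independence; it then works entirely with \emph{set} approximants $A_n^i\in\BB_n^i$ satisfying $\PP(A_n^i\triangle A_0^i)\to 0$ (via the second bullet point of \eqref{preliminaries:weak}), and passes to the limit in $\PP$-probability. You instead keep the finite index set $J$ arbitrary throughout and use \emph{function} approximants $f_i^{(n)}=\PP_{\BB_n^i}\mathbbm{1}_{A_i}$, which are $[0,1]$-valued and converge in $\LL^1(\PP)$ by the first bullet point of \eqref{preliminaries:weak}; the functional form of conditional independence (a monotone-class consequence of the set version) then lets you pass to the limit directly in $\LL^1(\PP)$ via your two telescoping/contraction observations. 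Your approach buys economy --- no reduction to two factors, no appeal to join-continuity --- at the modest cost of invoking the functional (rather than set-theoretic) formulation of conditional independence; the paper's approach stays closer to indicators but needs the extra structural input about $\lor$. Both are soft and both isolate the same essential point: weak convergence of the $\BB_n^i$ handles the moving integrands, while strong convergence of the $\CC_n$ is what allows the conditioning to pass to the limit.
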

\begin{remark}\label{remark:preservation}
One says conditional independence is preserved in the limit, under a convergence type (iC), when the statement of Proposition~\ref{proposition:preservation:strong} prevails for any choice of the $\CC_n$ and the $\BB_n^i$s, and with the words ``strongly'' and ``weakly'' replaced by ``for the convergence type (iC)'' therein. This specializes to (unconditional) independence when $\CC_n$ is $\PP$-trivial for every $n\in \mathbb{N}_0$. By \eqref{preliminaries:order} it follows that (unconditional) independence is preserved in the limit under any of the convergence types \ref{ONC}--\ref{MC}--\ref{ASC}--\ref{STC}--\ref{HC}--\ref{SC}--\ref{WC}. For conditional independence, we must except \ref{WC}, cf. Example~\ref{example:indep} below. 
\end{remark}
\begin{proof}
We may assume $\II$ is finite; then, thanks to \eqref{preliminaries:weak}, last bullet point, via mathematical induction and properties of conditional independence, we reduce to the case $\II=\{0,1\}$; finally, on account of \eqref{preliminaries:weak}, second bullet point, there is no loss of generality in taking $\BB_0^i=\BB_\PP^i$ for each $i\in \II$. 
Now take  $A^0_0\in \BB_0^0$ and  $A^1_0\in \BB_0^1$ arbitrary. Then for each $i\in \{0,1\}$ we find a sequence $(A_n^i)_{n\in \mathbb{N}}$ with $A_n^i\in \BB^i_n$ for each $n\in \mathbb{N}$ and with $\lim_{n\to\infty}\PP(A_n^i\triangle A_0^i)=0$. Now, $\PP_{\CC_n}(A_0^0\cap A_0^1)\to \PP_{\CC_0}(A_0^0\cap A_0^1)$,  $\PP_{\CC_n}(A_0^0)\to \PP_{\CC_0}(A_0^0)$ and  $\PP_{\CC_n}(A_0^1)\to \PP_{\CC_0}(A_0^1)$ in $\PP$-probability. Also, for each $n\in \mathbb{N}$, 
\begin{itemize}
\item thanks to independence, $\PP$-a.s. $\PP_{\CC_n}(A_n^0\cap A_n^1)  =\PP_{\CC_n}(A_n^0)\PP_{\CC_n}(A_n^1)$,
\item whilst using the elementary equality $\vert \mathbbm{1}_C-\mathbbm{1}_D\vert= \mathbbm{1}_{C\triangle D}$ for sets $C$ and $D$, we find that $$\PP\vert \PP_{\CC_n}(A_0^0\cap A_0^1)-\PP_{\CC_n}(A_n^0\cap A_n^1)\vert$$ $$=\PP\vert \PP_{\CC_n}(A_0^0\cap A_0^1)-\PP_{\CC_n}(A_n^0\cap A_0^1)+\PP_{\CC_n}(A_n^0\cap A_0^1)-\PP_{\CC_n}(A_n^0\cap A_n^1)\vert$$ $$\leq \PP(A_0^0\triangle A_n^0)+\PP(A_0^1\triangle A_n^1)$$ and likewise $$\PP\vert \PP_{\CC_n}(A_n^0) \PP_{\CC_n}(A_n^1)-\PP_{\CC_n}(A_0^0) \PP_{\CC_n}(A_0^1)\vert\leq  \PP(A_0^0\triangle A_n^0)+\PP(A_0^1\triangle A_n^1).$$ 
\end{itemize}
In particular, $\PP_{\CC_n}(A_n^0) \PP_{\CC_n}(A_n^1)-\PP_{\CC_n}(A_0^0) \PP_{\CC_n}(A_0^1)\to 0$ and $\PP_{\CC_n}(A_0^0\cap A_0^1)-\PP_{\CC_n}(A_n^0)\PP_{\CC_n}(A_n^1)\to 0$ in $\PP$-probability. 
Since convergence in probability is preserved under addition and multiplication and since the limit in probability is a.s. unique, letting $n\to\infty$, yields that $\PP$-a.s. $\PP_\CC(A^0_0\cap A^1_0)=\PP_\CC(A^0_0)\PP_\CC(A^1_0)$, as required.
\end{proof}

\begin{example}\label{example:indep}
We show that conditional independence is generally not preserved under \ref{WC} to the $\PP\text{-}\liminf$ (see \eqref{preliminaries:weak}, second bullet point, for the notation). --- Without the latter insistence, a counterexample is trivial: take any $A\in \FF$ with $\PP(A)\in (0,1)$, for $n\in \mathbb{N}$ let $\BB_n=\HH:=\overline{\sigma(A)}^\PP$ be the $\PP$-complete $\sigma$-field generated by $A$. Then $\BB_n$ is conditionally independent of $\BB_n$ given $\BB_n$ for each $n$, but the strong (indeed, in every sense) limit $\HH$ is not conditionally independent of itself given the trivial $\sigma$-subfield, to which the $\BB_n$ converge weakly.) --- Take $\Omega=[0,1)$, $\FF=\mathcal{B}(\Omega)$ the Borel $\sigma$-field, $\PP=$ Lebesgue measure. For $n\in \mathbb{N}$ set $$B_n:=\bigcup_{k=0}^{2^{n-2}-1}\left[\frac{2k}{2^n},\frac{2k+1}{2^n}\right),$$ and let $\BB_n=\overline{\sigma(B_n)}^\PP$ be the $\PP$-complete $\sigma$-field generated by $B_n$. Finally let $\BB_0=\overline{\{\emptyset,\Omega\}}^\PP$ be the trivial $\sigma$-subfield. Then $\BB_n$ converges weakly to $\BB_0=\BB_\PP=\PP\text{-}\liminf_{n\to\infty}\BB_n$, but not strongly \cite[Example~3.4]{piccinini}. Finally, denote $x=(\sqrt{6}-1)/8$; notice that $1/2<1-2x+1/16<1-x<1-x+\frac{1}{16}<1$;  and consider the events $$A:=\left[\frac{3}{16},\frac{7}{16}\right)\cup \left[1-x,1\right)\text{ and }B:=\left[\frac{1}{16},\frac{5}{16}\right)\cup\left[1-2x+\frac{1}{16},1-x+\frac{1}{16}\right)$$ so that $$A\cap B=\left[\frac{3}{16},\frac{5}{16}\right)\cup \left[1-x,1-x+\frac{1}{16}\right).$$ A simple calculation yields that $\PP$-a.s. for each $n\in \mathbb{N}_{\geq 4}$,
$$\PP(A\vert \BB_n)=\PP(B\vert \BB_n)=\frac{1/8}{1/4}\mathbbm{1}_{B_n}+\frac{1/8+x}{3/4}\mathbbm{1}_{\Omega\backslash B_n}$$ whilst $$\PP(A\cap B\vert \BB_n)=\frac{1/16}{1/4}\mathbbm{1}_{B_n}+\frac{1/16+1/16}{3/4}\mathbbm{1}_{\Omega\backslash B_n}.$$ This renders the $\PP$-a.s. equality $\PP_{\BB_n}(\mathbbm{1}_{A})\PP_{\BB_n}(\mathbbm{1}_{B})=\PP_{\BB_n}(\mathbbm{1}_{A\cap B})$. Thus $A$ and $B$, equivalently the respective $\PP$-complete $\sigma$-fields generated by them, are conditionally independent given $\BB_n$ for each $n\in \mathbb{N}_{\geq 4}$. But $A$ and $B$ are not conditionally independent given the weak limit $\BB_0=\BB_\PP$ of the $\BB_n$, for they are not independent: $(1/4+x)^2\ne 1/8+1/16$, as is readily verified.\finish
\end{example}

\section{Invariance under passage to equivalent probability measure}\label{section:invariance}
In what follows, for a convergence mode (iC), by saying that it is invariant under passage to an equivalent probability measure, we mean that, whenever $(\BB_n)_{n\in \mathbb{N}_0}\subset \FFF$,

$\BB_n\to\BB_0$ in the sense of (iC) under $\PP$ $\iff$ $\BB_n\to\BB_0$ in the sense of (iC) under $\QQ$, 

\noindent provided $\QQ\sim \PP$. (Note that, given $(\Omega,\FF)$, $\FFF$ depends on $\PP$ only through $\NN=\PP^{-1}(\{0\})$.) 

Recall that for finite measures $\mu$ and $\nu$ on $(\Omega,\FF)$,  $\mu\ll\nu$ is equivalent to $$\forall \epsilon\in (0,\infty)\, \exists \delta\in (0,\infty)\, \forall A\in \FF:\, \nu(A)<\delta\Rightarrow \mu(A)<\epsilon.$$

\begin{proposition}\label{prop:two}
\emph{\ref{HC}} is invariant under passage to an equivalent probability measure. Indeed the distance $D$, up to equivalence, depends on $\PP$ only up to equivalence. 
\end{proposition}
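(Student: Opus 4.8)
The plan is to show that if $\QQ\sim\PP$ then the metrics $D_\PP$ and $D_\QQ$ on $\FFF$ (note $\FFF$ is the same for both, since $\NN$ is unchanged) are topologically equivalent — in fact uniformly equivalent in a quantitative sense governed by the $\epsilon$–$\delta$ characterization of absolute continuity recalled just above the statement. Since $D(\AA,\BB)=\rho(\AA,\BB)+\rho(\BB,\AA)$ and $\rho(\AA,\BB)=\sup_{A\in\AA}\inf_{B\in\BB}\mu(A\triangle B)$, and since everything is built from the single set function $A\mapsto\mu(A\triangle B)$, it suffices to compare $\PP(A\triangle B)$ with $\QQ(A\triangle B)$ uniformly over $A,B\in\FF$. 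This is exactly where $\QQ\ll\PP$ (and symmetrically $\PP\ll\QQ$) enters: given $\epsilon>0$ pick $\delta>0$ with $\PP(C)<\delta\Rightarrow\QQ(C)<\epsilon$ for all $C\in\FF$; applying this with $C=A\triangle B$ gives the one-sided comparison, and reversing the roles of $\PP$ and $\QQ$ gives the other.

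The key steps, in order, are: (1) record that for fixed $\AA,\BB\in\FFF$ and any $A\in\AA$, $\inf_{B\in\BB}\QQ(A\triangle B)$ is small as soon as $\inf_{B\in\BB}\PP(A\triangle B)$ is small — precisely, if $\inf_{B\in\BB}\PP(A\triangle B)<\delta$ then, choosing $B$ nearly attaining the infimum, $\PP(A\triangle B)<\delta$, hence $\QQ(A\triangle B)<\epsilon$, hence $\inf_{B\in\BB}\QQ(A\triangle B)\le\epsilon$; (2) take the supremum over $A\in\AA$ to get: $\rho_\PP(\AA,\BB)<\delta\Rightarrow\rho_\QQ(\AA,\BB)\le\epsilon$, and symmetrically; (3) add the two symmetric versions to conclude $D_\PP(\AA,\BB)<\delta/2$ (say) $\Rightarrow D_\QQ(\AA,\BB)\le 2\epsilon$, and by interchanging $\PP$ and $\QQ$ the converse implication with the roles reversed; (4) observe that this uniform (in $\AA,\BB$) modulus-of-continuity relationship between $D_\PP$ and $D_\QQ$ immediately yields that $D_\PP(\BB_n,\BB_0)\to 0\iff D_\QQ(\BB_n,\BB_0)\to 0$, which is the asserted invariance of \ref{HC}, and also the stronger ``up to equivalence'' claim about $D$ itself.

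I do not expect a serious obstacle here; the only mild subtlety is notational bookkeeping — being careful that the infimum in $\rho$ need not be attained, so one works with a near-optimal $B$ and an arbitrarily small slack, and that one must invoke the $\epsilon$–$\delta$ condition in both directions ($\QQ\ll\PP$ and $\PP\ll\QQ$), both of which hold because $\QQ\sim\PP$. One should also note that the infimum being over $B\in\BB$ (a $\sigma$-field, not depending on the measure beyond $\NN$) is what makes the comparison clean: the same competitor sets $B$ are available under both measures. It is worth remarking in passing that this argument does not require the full strength of equivalence uniformly — it is genuinely the $\epsilon$–$\delta$ form of mutual absolute continuity that is used, which is why one gets the sharper conclusion that $D$ depends on $\PP$ only up to equivalence of measures, matching the ``uniformly absolutely continuous'' hypothesis in \cite[Theorem~VIII.2.23]{hu} but phrased intrinsically.
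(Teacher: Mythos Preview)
Your proposal is correct and follows essentially the same route as the paper's own proof: use the $\epsilon$--$\delta$ characterization of absolute continuity (applied to the sets $A\triangle B$) to show that $D_\PP(\AA,\BB)$ small forces $D_\QQ(\AA,\BB)$ small, and then invoke symmetry in $\PP$ and $\QQ$. The paper's version is slightly terser (it passes directly from $D_\PP(\AA,\BB)<\delta$ to $D_\QQ(\AA,\BB)\leq 2\epsilon$ without the intermediate $\rho$-level bookkeeping, and your $\delta/2$ can in fact be just $\delta$ since $D=\rho+\rho$ already bounds each $\rho$), but the argument is the same.
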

\begin{remark}
By \eqref{preliminaries:order}, \ref{ONC}, $q<p$, is equivalent to \ref{HC}, so that \ref{ONC}, $q<p$, too is invariant under passage to an equivalent probability measure. The case $p=q$ remains open.
\end{remark}
\begin{proof}
Let $\QQ\sim \PP$, denote by $D_\PP$ and $D_\QQ$ the metrics associated to $\PP$ and $\QQ$, respectively. Let $\epsilon\in (0,\infty)$, $\AA\in \FFF$. Since $\QQ\ll\PP$, there is a $\delta\in (0,\infty)$ such that for all $A\in \FF$, $\PP(A)<\delta \Rightarrow \QQ(A)<\epsilon$. Then for all $\BB\in \FFF$, $D_\PP(\AA,\BB)<\delta$ implies $D_\QQ(\AA,\BB)\leq 2 \epsilon$. 
\end{proof}

\begin{proposition}
\emph{\ref{WC}} is invariant under passage to an equivalent probability measure. 
\end{proposition}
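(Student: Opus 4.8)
The plan is to sidestep the $\LL^p$/conditional-expectation formulation of \ref{WC} entirely and work instead with the $\liminf$-type characterization recorded in \eqref{preliminaries:weak} (second bullet point): for a sequence $(\BB_n)_{n\in\mathbb{N}}\subset\FFF$ and $\BB_0\in\FFF$, one has $\BB_n\to\BB_0$ weakly under $\PP$ iff $\BB_0\subset\BB_\PP$, where $\BB_\PP:=\{A\in\FF:\lim_{n\to\infty}\inf_{B\in\BB_n}\PP(A\triangle B)=0\}$. Since $\QQ\sim\PP$ forces the $\QQ$-null sets to coincide with the $\PP$-null sets, the collection $\FFF$ is unchanged under the passage (as already noted at the head of Section~\ref{section:invariance}), so the same sequence $(\BB_n)_{n\in\mathbb{N}_0}$ is being tested under both measures. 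Consequently it suffices to prove, for an arbitrary fixed sequence $(\BB_n)_{n\in\mathbb{N}}$ of $\sigma$-subfields, the single set identity $\BB_\PP=\BB_\QQ$; the equivalence of \ref{WC} under $\PP$ and under $\QQ$ is then immediate.

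First I would prove $\BB_\PP\subset\BB_\QQ$. Fix $A\in\BB_\PP$ and $\epsilon\in(0,\infty)$. Since $\QQ\ll\PP$, the $\epsilon$-$\delta$ reformulation of absolute continuity recalled just before Proposition~\ref{prop:two} yields a $\delta\in(0,\infty)$ such that $\PP(C)<\delta\Rightarrow\QQ(C)<\epsilon$ for all $C\in\FF$. Because $\inf_{B\in\BB_n}\PP(A\triangle B)\to0$, for all sufficiently large $n$ this infimum is $<\delta$, so one may select $B_n\in\BB_n$ with $\PP(A\triangle B_n)<\delta$, whence $\QQ(A\triangle B_n)<\epsilon$ and therefore $\inf_{B\in\BB_n}\QQ(A\triangle B)<\epsilon$. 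Thus $\limsup_{n\to\infty}\inf_{B\in\BB_n}\QQ(A\triangle B)\leq\epsilon$; letting $\epsilon\downarrow0$ gives $A\in\BB_\QQ$. The reverse inclusion $\BB_\QQ\subset\BB_\PP$ follows by the identical argument with the roles of $\PP$ and $\QQ$ interchanged, using $\PP\ll\QQ$. This yields $\BB_\PP=\BB_\QQ$ and completes the proof.

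I do not expect a genuine obstacle here: the entire content is that ``$\PP(A\triangle B_n)\to0$ along a suitable selection'' transfers to the same statement for $\QQ$, which is exactly what mutual absolute continuity provides, and the selection of $B_n\in\BB_n$ is immediate from the definition of the infimum. The only point to watch is to phrase the selection so that $\PP(A\triangle B_n)<\delta$ holds for all large $n$ simultaneously rather than merely along a subsequence, which is clear once one uses the convergence $\inf_{B\in\BB_n}\PP(A\triangle B)\to0$ (not just a bound on the $\liminf$). This should be contrasted with \ref{SC}, whose invariance under an equivalent change of measure admits no such clean metric/$\liminf$ description and is correspondingly more subtle.
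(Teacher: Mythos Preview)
Your proof is correct and follows essentially the same route as the paper: reduce via the second bullet of \eqref{preliminaries:weak} to the identity $\BB_\PP=\BB_\QQ$, then use the $\epsilon$--$\delta$ form of absolute continuity to transfer $\PP(A\triangle B_n)\to 0$ to $\QQ(A\triangle B_n)\to 0$. The paper's version is terser, phrasing the transfer directly for a chosen sequence $(A_n)_{n\in\mathbb{N}}$, but the content is identical.
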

\begin{proof}
By \eqref{preliminaries:weak}, second bullet point, it is enough to verify that $\BB_\PP=\BB_\QQ$ whenever $\PP\sim\QQ$.  
But assuming $\QQ\ll\PP$, thanks to the equivalent condition for absolute continuity noted above, if for some $A\in \FF$ there exist $A_n\in \BB_n$ for $n\in \mathbb{N}$ with $\lim_{n\to\infty}\PP(A_n\triangle A)=0$, then also $\lim_{n\to\infty}\QQ(A_n\triangle A)=0$. 
\end{proof}
Recall now the statement of abstract Bayes' theorem. Letting $\QQ$ be another probability measure on $\FF$, equivalent to $\PP$: 
\begin{quote}
For any $\FF$/$\mathcal{B}([-\infty,\infty])$-measurable $X$ and any sub-$\sigma$-field $\GG$ of $\FF$, $\QQ_ \GG (X) \PP_\GG(\frac{d\QQ}{d\PP})= \PP_\GG (\frac{d\QQ}{d\PP} X)$ a.s., in the sense that the left hand-side is well-defined iff the right hand-side is so, whence they are equal.
\end{quote}
\begin{proposition}
\emph{\ref{SC}} is invariant under passage to an equivalent probability measure. 
\end{proposition}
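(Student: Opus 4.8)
The plan is to derive the $\QQ$-version of \ref{SC} from the $\PP$-version by means of the abstract Bayes' theorem just recalled, using the symmetry of the relation $\QQ\sim\PP$ to halve the work. Put $Z:=\frac{d\QQ}{d\PP}$; since $\QQ\sim\PP$ one has $Z\in\LL^1(\PP)$ with $Z>0$ $\PP$-a.s., and, by the $\epsilon$-$\delta$ form of absolute continuity recalled above (applied in both directions), convergence in $\PP$-probability and convergence in $\QQ$-probability of a sequence of random variables are the same notion; in particular ``$\PP$-a.s.'' and ``$\QQ$-a.s.'' coincide. It therefore suffices to prove the single implication: if $\BB_n\to\BB_0$ strongly under $\PP$, then $\QQ_{\BB_n}\mathbbm{1}_A\to\QQ_{\BB_0}\mathbbm{1}_A$ in $\QQ$-probability for every $A\in\FF$ --- the reverse implication follows by interchanging $\PP$ and $\QQ$.

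Fix $A\in\FF$. Applying the abstract Bayes' theorem with $X=\mathbbm{1}_A$ and $\GG=\BB_n$, then with $\GG=\BB_0$ --- admissible because $Z\mathbbm{1}_A\in\LL^1(\PP)$ --- yields, $\PP$-a.s. (equivalently $\QQ$-a.s.), the representations $\QQ_{\BB_n}\mathbbm{1}_A=\PP_{\BB_n}(Z\mathbbm{1}_A)/\PP_{\BB_n}Z$ and $\QQ_{\BB_0}\mathbbm{1}_A=\PP_{\BB_0}(Z\mathbbm{1}_A)/\PP_{\BB_0}Z$, the denominators being $>0$ $\PP$-a.s. since $Z>0$ $\PP$-a.s. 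As $Z$ and $Z\mathbbm{1}_A$ both lie in $\LL^1(\PP)$, the assumed strong convergence under $\PP$ --- in the equivalent formulation of \eqref{preliminaries:strong}, i.e. $\PP_{\BB_n}f\to\PP_{\BB_0}f$ in $\LL^1(\PP)$ for every $f\in\LL^1(\PP)$ --- gives $\PP_{\BB_n}Z\to\PP_{\BB_0}Z$ and $\PP_{\BB_n}(Z\mathbbm{1}_A)\to\PP_{\BB_0}(Z\mathbbm{1}_A)$ in $\LL^1(\PP)$, a fortiori in $\PP$-probability.

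It remains to pass to the quotient: the map $(u,v)\mapsto u/v$ is continuous on $\mathbb{R}\times(\mathbb{R}\backslash\{0\})$, and the limit pair $(\PP_{\BB_0}(Z\mathbbm{1}_A),\PP_{\BB_0}Z)$ takes values there $\PP$-a.s., so a continuous-mapping argument for convergence in probability delivers $\QQ_{\BB_n}\mathbbm{1}_A\to\QQ_{\BB_0}\mathbbm{1}_A$ in $\PP$-probability, hence in $\QQ$-probability; since $A\in\FF$ was arbitrary, $\BB_n\to\BB_0$ strongly under $\QQ$. I expect this final quotient step to be the only delicate point: the denominators $\PP_{\BB_n}Z$ are $\PP$-a.s. positive but not bounded below uniformly in $n$, so one must note that, for each $\eta>0$, a single constant $c>0$ with $\PP(\PP_{\BB_0}Z\le 2c)<\eta$ already controls, for large $n$, the probability that $\PP_{\BB_n}Z$ lies near $0$ (and similarly one truncates the numerator); granted this, the passage to the limit is routine.
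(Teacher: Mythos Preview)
Your proof is correct and follows essentially the same route as the paper's: Bayes' rule to express $\QQ_{\BB_n}\mathbbm{1}_A$ as a quotient, the $\LL^1$-characterization of strong convergence from \eqref{preliminaries:strong} for numerator and denominator, and then passage to the limit in the quotient (the paper invokes the subsequence characterization of convergence in probability where you invoke continuous mapping, but these are equivalent here). Your added discussion of controlling the denominators is not strictly needed once the continuous mapping theorem is granted, but it does no harm.
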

\begin{proof}
Assume $\BB_n\to\BB_0$ strongly under $\PP$. Let $\QQ\sim \PP$. Note $d\QQ/d\PP$ and then all the $\PP_{\BB_n}(\frac{d\QQ}{d\PP})$, $n\in \mathbb{N}_0$, may be chosen from their equivalence classes to be strictly positive everywhere. Let $A\in \FF$. Then by Bayes' rule a.s. $$\QQ_{\BB_n}\mathbbm{1}_A=\PP_{\BB_n}\left(\frac{d\QQ}{d\PP} \mathbbm {1}_A\right)\Big/\PP_{\BB_n}\left(\frac{d\QQ}{d\PP}\right).$$ By \eqref{preliminaries:strong} the numerator and denominator both converge as $n\to\infty$ in  $\LL^1(\PP)$, hence in $\PP$- (equivalently, $\QQ$-) probability, to the respective expressions in which $\BB_0$ replaces $\BB_n$. Convergence in probability is preserved under taking quotients (assuming the denominators are non-zero; e.g. from the characterization through the a.s. convergence of subsequences) and the claim follows by another application of Bayes' rule. 
\end{proof}

\begin{remark}
According to \cite[Proposition~3.3, Lemma~1.3]{alonso-brambila} \ref{SC} is equivalent to the conjunction of \ref{WC} and 
\begin{enumerate}[label=($\perp$C),ref=($\perp$C)]
\item\label{OC} \emph{Orthogonal convergence}. $\BB_n$ converges to $\BB_0$  \emph{orthogonally} if $\mathbbm{1}_{A_n}-\PP_{\BB_0}\mathbbm{1}_{A_n}\to 0$ weakly in $\LL^2(\PP)$, whenever $A_n\in \BB_n$ for each $n\in \mathbb{N}$.
\end{enumerate}
It remains open whether \ref{OC} too is invariant under passage to an equivalent probability measure. 
\end{remark}

\begin{proposition}
\emph{\ref{ASC}} is invariant under passage to an equivalent probability measure. 
\end{proposition}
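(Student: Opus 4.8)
The plan is to repeat, essentially verbatim, the argument just given for \ref{SC}, but carrying $\PP$-a.s.\ convergence through the computation in place of convergence in $\PP$-probability; the abstract Bayes' theorem again does the work. Since $\QQ\sim\PP$ is a symmetric relation, it suffices to prove that $\BB_n\to\BB_0$ in the sense of \ref{ASC} under $\PP$ entails $\BB_n\to\BB_0$ in the sense of \ref{ASC} under $\QQ$. So fix $\QQ\sim\PP$ and, exactly as in the proof for \ref{SC}, choose everywhere strictly positive versions of $d\QQ/d\PP$ and of each $\PP_{\BB_n}(\frac{d\QQ}{d\PP})$, $n\in\mathbb{N}_0$ (legitimate, since the conditional expectation of a strictly positive integrable function is a.s.\ strictly positive, and $\PP$-null sets lie in the $\PP$-complete $\sigma$-subfields $\BB_n$).

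First I would fix an arbitrary $f\in\LL^1(\QQ)$ and set $g:=\frac{d\QQ}{d\PP}\,f$, so that $g\in\LL^1(\PP)$ because $\PP\vert g\vert=\QQ\vert f\vert<\infty$. Applying the hypothesis \ref{ASC} under $\PP$ to the two elements $g$ and $d\QQ/d\PP$ of $\LL^1(\PP)$ produces a common set of full $\PP$-measure --- equivalently, of full $\QQ$-measure --- on which simultaneously $\PP_{\BB_n}g\to\PP_{\BB_0}g$ and $\PP_{\BB_n}(\frac{d\QQ}{d\PP})\to\PP_{\BB_0}(\frac{d\QQ}{d\PP})$, the latter limit being strictly positive. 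By the abstract Bayes' theorem, a.s.
$$\QQ_{\BB_n}f=\PP_{\BB_n}\!\left(\frac{d\QQ}{d\PP}\,f\right)\Big/\PP_{\BB_n}\!\left(\frac{d\QQ}{d\PP}\right)=\PP_{\BB_n}g\,\Big/\,\PP_{\BB_n}\!\left(\frac{d\QQ}{d\PP}\right)\qquad(n\in\mathbb{N}_0).$$
Since the quotient of two $\PP$-a.s.\ convergent real sequences whose denominator limit is $\PP$-a.s.\ strictly positive converges $\PP$-a.s.\ to the quotient of the limits, and since $\PP$-a.s.\ coincides with $\QQ$-a.s., we get $\QQ_{\BB_n}f\to\QQ_{\BB_0}f$ $\QQ$-a.s.; another application of Bayes' theorem identifies the limit as $\QQ_{\BB_0}f$. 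As $f\in\LL^1(\QQ)$ was arbitrary, this is precisely \ref{ASC} under $\QQ$.

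I do not anticipate a genuine obstacle here; the proof is routine once one is careful about two points. The first is that the definition of \ref{ASC} is ``for every $f$, $\PP$-a.s.\ $\dots$'' and not ``$\PP$-a.s., for every $f$ $\dots$'', so it is enough to argue one fixed $f$ at a time --- hence to control only the two functions $g$ and $d\QQ/d\PP$ --- and no separability of $\LL^1(\PP)$ is invoked (it would be needed for the stronger ``$\PP$-a.s., simultaneously for all $f$'' reading of \ref{ASC}). The second is the choice of everywhere strictly positive versions of the densities and of the conditional expectations $\PP_{\BB_n}(\frac{d\QQ}{d\PP})$, so that the pointwise quotient furnished by Bayes' formula is meaningful on a set of full measure.
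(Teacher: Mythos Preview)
Your argument is correct and is essentially the paper's own proof: apply abstract Bayes' theorem to write $\QQ_{\BB_n}f$ as a quotient of $\PP$-conditional expectations of $\LL^1(\PP)$ functions, use \ref{ASC} under $\PP$ to pass to the $\PP$-a.s.\ (hence $\QQ$-a.s.)\ limit in numerator and denominator, and identify the result via Bayes' theorem again. Your additional remarks on the quantifier order in the definition of \ref{ASC} and on choosing strictly positive versions are sound and merely make explicit what the paper's proof leaves implicit.
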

\begin{proof}
Assume $\BB_n\to\BB_0$ in the almost-sure sense under $\PP$. Let $\QQ\sim \PP$. Then by Bayes' rule, for any $f\in \LL^1(\QQ)$, a.s. $$\QQ_{\BB_n}f=\PP_{\BB_n}\left(\frac{d\QQ}{d\PP} f\right)\Big/\PP_{\BB_n}\left(\frac{d\QQ}{d\PP}\right).$$ Since $\frac{d\QQ}{d\PP} f$ and $\frac{d\QQ}{d\PP}$ both belong to $\LL^1(\PP)$, by the very definition of \ref{ASC}, the numerator and denominator both converge $\PP$- (equivalently $\QQ$-) a.s. as $n\to\infty$ to the respective expressions in which $\BB_0$ replaces $\BB_n$. Another application of Bayes' rule concludes the argument. 
\end{proof}

\begin{question}
Given this invariance of the various convergence modes, can something akin to the characterization of convergence in probability through the a.s. convergence of subsequences, be offered? I.e. can the convergence modes be characterized in such a way as to make manifest the invariance under passage to an equivalent probability measure?
\end{question}

\bibliographystyle{amsplain}
\bibliography{Biblio_convergence}
\end{document}